\newtheorem{theorem}{Theorem}
\newtheorem{corollary}{Corollary}
\newcommand{\lcm}{ {\rm lcm}}
\algnewcommand\algorithmicforeach{\textbf{for each}}
\title{Advances in Tabulating Carmichael Numbers}
\author{Andrew Shallue}
\author{Jonathan Webster}
\address[1]{Illinois Wesleyan University}
\address[2]{Butler University}
\begin{document}

\begin{abstract}
We report that there are $49679870$ Carmichael numbers less than $10^{22}$ which is an order of magnitude improvement on Richard Pinch's prior work.  We find Carmichael numbers of the form $n = Pqr$ using an algorithm bifurcated by the size of $P$ with respect to the tabulation bound $B$.   For $P < 7 \cdot 10^7$, we found  $35985331$ Carmichael numbers and $1202914$ of them were less than $10^{22}$.  When $P > 7 \cdot 10^7$,  we found $48476956$ Carmichael numbers less than $10^{22}$.  We provide a comprehensive overview of both cases of the algorithm.  For the large case, we show and implement asymptotically faster ways to tabulate compared to the prior tabulation.  We also provide an asymptotic estimate of the cost of this algorithm.  It is interesting that Carmichael numbers are worst case inputs to this algorithm.  So, providing a more robust asymptotic analysis of the cost of the algorithm would likely require resolution of long-standing open questions regarding the asymptotic density of Carmichael numbers.   
\end{abstract}

\maketitle


\section{Introduction}

Fermat's Little Theorem states that when $p$ is prime that $a^{p} \equiv a \pmod{p}$ for any integer $a$.  The converse of this theorem gives a computationally efficient way to detect if an integer is composite.  Pick an integer $a < n$ and compute $a^{n} \pmod{n}$.  If the result is not $a$, conclude that $n$ is composite.  Unfortunately, there are composite integers for which this test will fail to detect as composite, e.g. $n = 341$ and $a=2$.  Even worse, there are composite numbers for which $a^n \equiv a \pmod{n}$ for any $a$, and the least such example is $561$.  It would be reasonable to call these numbers \textit{absolute Fermat pseudoprimes} or \textit{\u{S}imerka numbers} since V. \u{S}imerka found the first seven examples in 1885 \cite{first_source}.  However, we will keep with convention and call them \textit{Carmichael numbers}.  

For a background and survey on Carmichael numbers we refer to \cite{carmsurvey}.  While that survey is a bit dated, it shows the breadth of questions that have been asked.  Our concern is tabulating Carmichael numbers and this specific problem has a long pedigree \cite{swift, pom_self, jaeschke_carm, keller, guthmann}.  The tabulation bounds for those sources were (respectively): $10^9$, $ 25 \cdot 10^9$, $10^{12}$, $10^{13}$, and $10^{14}$.   Our key source is R.G.E. Pinch's \textit{The Carmichael numbers up to $10^{15}$} \cite{pinch15}.  He also published a series of reports in which he announced further tabulations all using the same algorithm \cite{pinch16, pinch17, pinch18, pinch21}. 
 The final tabulation found all Carmichael numbers less than $10^{21}$.    We address the problem of tabulation by investigating the difficulty of finding primes $q$ and  $r$ given $P$ so that $Pqr$ is a Carmichael number.

We analyze this problem from two different perspectives:  \textit{bounded} and \textit{unbounded}.  In the former case, we construct $q$ and $r$ so that $Pqr < B$.  In the latter case, we analyse the cost to find any such primes so that $Pqr$ is a Carmichael number.  The unbounded problem is interesting on its own and arises naturally in the context of a bounded computation.  If $P$ is small enough with respect to $B$, any primes $q$ and $r$ for which $Pqr$ is a Carmichael number will inherently satisfy $Pqr < B$.

Our contributions are as follows.  We give a comprehensive overview and an analysis of tabulation algorithms.   The algorithm is bifurcated by size.  For the small case, we review the improvements introduced in \cite{sw_ants2}.  We do this for the sake of completeness and because the algorithms played a significant role in this tabulation.  In \cite{sw_ants2}, it was reported that the algorithm was run on $P < 3\cdot 10^6$ and now this has been extended to $P < 7\cdot 10^7$.  In contrast, this paper shows a better way to handle the large case.  In particular, there are specific cases in the tabulation which would have required an exponential amount of work but we give multiple ways to reduce this to only a polynomial amount of work (see Section \ref{sec:P_large}).  We also give a heuristic estimate on the amount of time to tabulate up to $B$, namely
\[ B\exp\left(\left(-\frac{1}{\sqrt{2}} +o(1)\right)\sqrt{\log B \log\log B}\right), \]
by providing an estimate on the number of preproducts the large case has to consider.  See Theorem \ref{num_preprod} in Section \ref{sec:analysis} for a precise statement and argument.  

In \cite{two_contra}, it is recounted how Shanks and Erd\H{o}s disagreed regarding the asymptotic count of Carmichael numbers.  Shanks had plausible beliefs informed by available empirical data.  Erd\H{o}s was informed by a heuristic arising from a theoretical construction technique.  The conflict between empirical data and theory is reflected in this very paper.  A practical algorithm for tabulation should be guided by empirically observation.  When we make arguments based off of this data, we will speak of a \textit{guide} or \textit{guidance}.  However, we are also informed by theory and do not want to give the impression that the guidance is a theorem (e.g., Subsection \ref{subsec:benefits}).   At the same time, we are concerned with asymptotic analysis of algorithms and offer asymptotic improvements whose benefits might not be empirically noticed in our current computational domain.  

While we remain far from proving Erd\H{o}s' heuristic, it is generally accepted as a valid model to understand the asymptotic count of Carmichael numbers.  His view gives a heuristic asymptotic count of 
\[ B \exp \left( \frac{ -k \log B \log \log \log B}{\log \log B} \right) \]
for some positive constant $k$.   This implies that the count is greater than  $ B^{1 - \epsilon}$ for any fixed $\epsilon >0$.  When Alfords, Pomerance, and Granville proved the infinitude of Carmichael numbers, they also established a lower bound  $B^{2/7}$ for their count\cite{inf_carm}.  The current best lower bound is $B^{1/3}$\cite{better_inf},  which is more empirically in line with what current tabulations show.  We seem quite far from the point at which tabulations could give empirical evidence for Erd\H{o}s' heuristic.  This dichotomy was also explicitly pointed out by Swift in 1975 \cite{swift}.

 The Erd\H{o}s construction starts  with a carefully selected composite number $L$, and builds Carmichael numbers around $L$.  There exist algorithmic versions of this construction technique that produced Carmichael numbers with billions of prime factors (e.g., \cite{subset}).  A tabulation algorithm should be able to account for how such a number would be found, even if only in a hypothetical way.  We show the sorts of numbers that arise from this construction technique play a role as worst-case inputs to our tabulation algorithm (see Subsection \ref{bad_problems}).  Thus, we would likely need the asymptotic count of these numbers to prove a strong bound on the algorithmic cost and we find it especially pleasing to see this problem show up in an intrinsic way in the algorithm.  If we were strictly guided by empirical observation it would have been more difficult to see this connection.  
 
The remainder of the paper is organized as follows.  In Section \ref{sec:background}, we review some general number-theoretic results that are needed and the specific results on Carmichael numbers.  In Section \ref{sec:alg_over}, we provide a high-level overview of the algorithm.  In Section \ref{sec:P_small}, we review the small case and discuss its benefits.  In Section \ref{sec:P_large}, we explain the large case. 
 This includes Pinch's approach and improvements based on \textit{divisors in residue classes} \cite{div_in_class, div_in_class_con}.  In Section \ref{sec:bndadmis}, we discuss how construction of preproducts differs between the small and the large cases.  In Section \ref{sec:analysis}, we provide a theorem on the count of preproducts.  This serves as a guide for the total amount of work our tabulation algorithm does.  Finally, we conclude with a report on our implementation and the tabulation to $B = 10^{22}$.  This independently verifies Pinch's prior tabulations and exceeds his by an order of magnitude.

\section*{Acknowledgements}  We thank Richard Pinch, Carl Pomerance, and Jon Sorenson for helpful discussions that have made this paper better.  We thank Drew Sutherland for his generosity and  the Simons Collaboration in Arithmetic Geometry,
Number Theory, and Computation for computational support.  We also thank Sungjin Kim for his help during a problem session at the 2022 West Coast Number Theory Conference.  Thanks also to Frank Levinson for generous donations to Butler University that funded the BigDawg compute cluster.  The second author is also grateful to the Holcomb Awards Committee for the financial support of this project.

\section{Background} \label{sec:background}

\subsection{Number Theoretic functions}
Let $\phi(n)$ be Euler's totient function.  Let $\lambda(n)$ be the Carmichael function or the reduced totient function.
We have
\begin{align*}
\lambda\left(p^{\alpha}\right) &= \begin{cases} \phi\left(p^{\alpha}\right) &\text{ if } \alpha \leq 2 \text{ or } p \geq 3 \\ \frac{1}{2} \phi\left(p^{\alpha}\right) &\text{ if } \alpha \geq 3 \text{ and } p = 2 \end{cases},
\end{align*}
\[ \lambda(n) = \lambda\left( {p_1}^{\alpha_1} \cdots {p_k}^{\alpha_k} \right) = \lcm \, \big(\lambda\left({p_1}^{\alpha_1}\right), \ldots, \lambda\left({p_k}^{\alpha_k}\right)\big), \]
where the $p_i$ are distinct prime divisors of $n$.  

This function shows up frequently in the tabulation algorithm.  We are influenced by Theorem 2 and 3 of \cite{lambda_cite} and Theorem 5 of \cite{lambda_cite2}.  A guiding and colloquial summary of these results might be, ``$\lambda(n)$ is frequently close to $n$ in size, and $\lambda(n)$ may be very small infrequently."  We forgo precise statements because they are not used explicitly and we leave the reader to consult the sources.  Even though we assume, as guidance, that $\lambda(n)$ is frequently close to $n$ in size, we design an algorithm that allows very small values of $\lambda(n)$.  Indeed, we will see exactly how and where the small values play a significant role in the algorithm.   

Let $\tau (n)$ be the function that counts the number of divisors of $n$.  Dirichlet established
\[ \sum_{ n \leq x } \tau(n) = x\log x + (2\gamma -1)x + O(\sqrt{x}) .\]

\subsection{Theorems for Carmichael numbers}

\begin{theorem}[Korselt's Criterion]
A composite number $n$ is a Carmichael number if and only if $n$ is squarefree and $(p-1) \mid (n-1)$ for all prime divisors $p$ of $n$.  
\end{theorem}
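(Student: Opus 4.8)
The plan is to prove both directions of Korselt's Criterion by working with the congruence $a^n \equiv a \pmod{n}$ for all integers $a$, which is the definition of a Carmichael number.

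For the forward direction, suppose $n$ is a Carmichael number. First I would show $n$ is squarefree: if some prime power $p^2 \mid n$, I would exhibit an integer $a$ for which $a^n \not\equiv a \pmod{p^2}$ (hence $\not\equiv a \pmod n$) — taking $a = p$ works, since $p^n \equiv 0 \pmod{p^2}$ (as $n \geq 2$) but $p \not\equiv 0 \pmod{p^2}$. Next, for each prime $p \mid n$, I would show $(p-1) \mid (n-1)$: pick a primitive root $g$ modulo $p$, so $g$ has multiplicative order exactly $p-1$. Since $\gcd(g,p)=1$ and $a^n \equiv a \pmod n$ implies $a^n \equiv a \pmod p$, we get $g^{n-1} \equiv 1 \pmod p$, which forces the order $p-1$ to divide $n-1$. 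One should also note $n$ is odd (if $n$ were even, taking $a = -1$ gives a contradiction since $n$ is composite hence $n > 2$), which guarantees such primitive roots and the needed coprimality.

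For the reverse direction, suppose $n$ is squarefree, composite, and $(p-1)\mid(n-1)$ for every prime $p \mid n$. I would verify $a^n \equiv a \pmod n$ for all $a$ by checking it modulo each prime $p \mid n$ and invoking CRT (legitimate since $n$ is squarefree, so $n = \prod p$ with distinct primes). Fix a prime $p \mid n$. If $p \mid a$, both sides are $\equiv 0 \pmod p$. If $p \nmid a$, then $a^{p-1} \equiv 1 \pmod p$ by Fermat, and since $(p-1) \mid (n-1)$ we may write $n - 1 = (p-1)k$, so $a^{n-1} = (a^{p-1})^k \equiv 1 \pmod p$, whence $a^n \equiv a \pmod p$. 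Combining over all $p \mid n$ gives $a^n \equiv a \pmod n$.

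I do not expect a serious obstacle here; this is a classical result with a short proof. The only point requiring a little care is the squarefree direction of the forward implication — one must produce a witness $a$ violating the Fermat congruence, and the cleanest choice is $a$ equal to a prime dividing $n$ to even multiplicity (or more robustly, any $a$ divisible by $p$ but not $p^2$). An alternative, slightly slicker route for the reverse direction is to use the structure of $(\mathbb{Z}/n\mathbb{Z})^\times$ directly: its exponent is $\lambda(n) = \lcm_{p \mid n}(p-1)$, and the hypothesis says each $p-1 \mid n-1$, so $\lambda(n) \mid n-1$, giving $a^{n-1} \equiv 1$ for all units $a$; the non-unit case then follows from squarefreeness via CRT as above. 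I would likely present the elementary prime-by-prime version for transparency.
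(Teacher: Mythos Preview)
Your proof is correct and follows the standard classical argument. However, the paper does not actually prove Korselt's Criterion; it merely states it as a background result in Section~\ref{sec:background} without proof, treating it as well known. So there is no paper proof to compare against --- your proposal supplies a complete and correct elementary proof where the paper gives none.
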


The tabulation methods construct $n$ in factored form.   Our goal will be to construct square-free odd numbers $n$  which will then be tested with Korselt's criterion.  Letting $d$ be the number of prime factors dividing $n$, then $d > 2$.  Let $P_k = \prod_{i = 1}^{k} p_i$ where $p_i < p_j$ iff $i < j$.  The primary tabulation methods concern $P_{d-2}$ and constructing or searching for $p_{d-1}$ and $p_{d}$.  Since these quantities are used so often, we write these quantities as $P$, $q$, and $r$.  For reference, this means that $P_{d-1} = Pq$ and $P_d = Pqr$. 

\begin{theorem}[Proposition 1 of \cite{pinch15}]
Let $n$ be a Carmichael number less than $B$.
\begin{enumerate}
\item[(2.1)] Let $k < d$.  Then $p_{k+1} < (B/P_{k})^{1/(d-k)}$ and $p_{k+1} - 1$ is relatively prime to $p_i$ for all $ i \leq k$.  
\item[(2.2)] We have $P_{d-1}r \equiv 1 \pmod{\lambda(P_{d-1})}$ and $r -1$ divides $P_{d-1} - 1$.  
\item[(2.3)] Each $p_i$ satisfies $p_i < \sqrt{n} < \sqrt{B}$.  
\end{enumerate}
\end{theorem}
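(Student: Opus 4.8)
The plan is to derive all three items directly from Korselt's criterion applied to the Carmichael number $n = p_1 \cdots p_d$ together with elementary size estimates. Since $n$ is squarefree and $(p_i - 1) \mid (n-1)$ for each $i$, and since $\gcd(p_i, n-1) = 1$ (as $p_i \mid n$), we know each $p_i - 1$ is coprime to every $p_j$; collecting these for $j \le k$ gives the coprimality assertion in (2.1). For the inequality in (2.1), I would observe that $P_k \cdot p_{k+1}^{\,d-k} \le P_k \cdot p_{k+1} p_{k+2} \cdots p_d = n < B$, using only that the primes are increasing, so $p_{k+1} < (B/P_k)^{1/(d-k)}$.

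For (2.2), I would specialize: since $(r-1) \mid (n-1)$ and $n - 1 = P_{d-1} r - 1 = P_{d-1}(r-1) + (P_{d-1} - 1)$, and $(r-1)$ divides the first summand, it must divide $P_{d-1} - 1$. For the congruence $P_{d-1} r \equiv 1 \pmod{\lambda(P_{d-1})}$, the idea is that Korselt gives $(p_i - 1) \mid (n - 1)$ for all $i \le d-1$, hence $\lambda(P_{d-1}) = \lcm_{i \le d-1}(p_i - 1)$ divides $n - 1 = P_{d-1} r - 1$, which is the claim. For (2.3), since $d > 2$ we have $n = p_i \cdot (n/p_i)$ with $n/p_i$ a product of at least two primes each exceeding... well, more carefully: $n / p_i$ is a product of $d - 1 \ge 2$ primes, and if $p_i$ were the largest prime factor this still need not immediately give $p_i < \sqrt n$ — so the cleanest route is: the complementary divisor $n/p_i$ is a product of at least two distinct primes, hence $n/p_i > p_i$ would need justification only when $p_i = p_d$. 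When $p_i = p_d$ is the largest, $n/p_d = P_{d-1} \ge p_1 p_2 > p_d$? That is false in general. The correct argument: $n/p_i$ is composite (product of $\ge 2$ primes) and divisible by a prime; since all prime factors of $n$ are distinct, if $p_i < p_j$ for some $j \ne i$ then $p_i < p_j \le n/p_i$ already. So it suffices to handle $p_i = p_d$: then $n/p_d = p_1 \cdots p_{d-1}$, and since $d - 1 \ge 2$, this is a product of two or more primes; we need $p_1 \cdots p_{d-1} > p_d$. This can fail (e.g. heuristically if $p_d$ is enormous), so the real content is that $p_d^2 < p_d \cdot p_1 \cdots p_{d-1} \le n$ requires $p_1 \cdots p_{d-1} > p_d$, which is NOT automatic — hence one actually invokes that $p_d - 1 \mid n - 1 < n$ forces $p_d \le n$, and combined with squarefreeness and $d \ge 3$... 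The honest approach here is to note $n/p_d = P_{d-1}$ and $p_d \mid n$ with $p_d \nmid P_{d-1}$, so $\gcd(p_d, P_{d-1}) = 1$; but we want a size bound, so instead: from (2.1) with $k = d-1$, $p_d < (B/P_{d-1})^{1} = B/P_{d-1} = n/P_{d-1} \cdot (B/n) $, not quite.

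The cleanest correct proof of (2.3): apply (2.1) with $k = d - 2$ to get $p_{d-1} < (B/P_{d-2})^{1/2} \le \sqrt{B}$, and with $k = d-1$ is the wrong direction for $p_d$; instead use that for a Carmichael number $n = Pqr$ with three or more factors, $p_d = r$ satisfies $r - 1 \mid Pq - 1 < Pq$, so $r < Pq + 1 \le Pq \cdot$ something, giving $r^2 < r \cdot Pq \cdot(1 + 1/(Pq)) $, and since $r \le Pq$ (from $r - 1 < Pq$, and $r, Pq$ integers with $r$ prime, generically $r < Pq$; the edge case $r = Pq$ is impossible as $Pq$ is composite), we get $r^2 < Pqr \cdot(1 + o(1))$, so $r < \sqrt{n}$ up to the strict inequality. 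I would present this last step carefully: $r - 1 \mid Pq - 1$ and $Pq - 1 \ge r - 1$ (else the divisibility forces $Pq - 1 = 0$), so $r \le Pq$; since $Pq$ is composite and $r$ prime, $r \ne Pq$, hence $r < Pq$, giving $r^2 < Pqr = n < B$.

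\textbf{Main obstacle.} The only genuinely delicate point is (2.3) for the largest prime factor: the naive "complementary divisor is bigger" argument fails, and one must use the Korselt divisibility $r - 1 \mid Pq - 1$ to force $r < Pq$, then multiply by $r$. Everything else is a one-line consequence of Korselt plus monotonicity of the $p_i$.
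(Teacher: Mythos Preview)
The paper does not supply its own proof of this theorem; it is quoted verbatim as Proposition~1 of Pinch~\cite{pinch15} and left uncited further. So there is nothing in the present paper to compare your argument against.

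That said, your argument is correct in all three parts. Item~(2.1) follows exactly as you say from Korselt plus the ordering $p_{k+1} \le p_{k+2} \le \cdots \le p_d$; item~(2.2) is the one-line manipulation $n-1 = P_{d-1}(r-1) + (P_{d-1}-1)$ together with $\lambda(P_{d-1}) = \lcm_{i<d}(p_i-1) \mid n-1$; and for item~(2.3) your final resolution is the right one: for $i<d$ there is a larger prime $p_j \mid n/p_i$, giving $p_i < p_j \le n/p_i$, while for $i=d$ the divisibility $r-1 \mid P_{d-1}-1$ from~(2.2) forces $r \le P_{d-1}$, and $r \ne P_{d-1}$ since $r$ is prime and $P_{d-1}$ is a product of $d-1 \ge 2$ primes, whence $r^2 < P_{d-1}r = n$.

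The only comment is presentational: your write-up of~(2.3) reads as live scratch work, including two false starts that you correctly abandon. In a final version, delete everything before ``The cleanest correct proof'' and present just the two-case argument (for $i<d$ and $i=d$) directly.
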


The condition that $p_{k+1} - 1$ is relatively prime to $p_i$ for all $ i \leq k$ means that every $P$ is a cyclic number (in the group theory sense).  Letting $c(B)$ count the number of cyclic numbers less than $B$,  Erd\H{o}s proved \cite{erdos_cyclic} 
\[  c(B) \sim \frac{e^{-\gamma}B}{\log \log \log B}. \]
 Paul Pollack recently improved this result, proving that $c(B)$ admits an asymptotic series expansion in descending powers of $\log \log \log B$ terms with explicitly computable constants \cite{pollack_cyclic}.  Since we also require $Pp_{d-1}^2 < B$, we can consider a significantly smaller set of cyclic numbers.  We will address the impact of this condition in Sections \ref{sec:bndadmis} and \ref{sec:analysis}.

\begin{theorem}[Proposition 2 of \cite{pinch15}]   
\label{CDtheorem}
There are integers $2 \leq D < P < C$ such that, putting $\Delta = CD - P^2$, we have
\begin{eqnarray}
q = \frac{(P-1)(P+D)}{\Delta} + 1,\\
 r = \frac{(P-1)(P+C)}{\Delta} + 1,\\
 P^2 < CD < P^2\left( \frac{p_{d-2} + 3}{p_{d-2} + 1} \right).
 \end{eqnarray}
\end{theorem}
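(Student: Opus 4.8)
The plan is to read $C$ and $D$ directly off the Carmichael number $n=Pqr$: set $C=(Pr-1)/(q-1)$ and $D=(Pq-1)/(r-1)$, and then check that these are integers in the stated range, that they satisfy $\Delta=CD-P^2$ with the displayed formulas for $q$ and $r$, and that $CD$ lies in the asserted interval. The first task is to see that $C$ and $D$ are positive integers, and this is the only place Korselt's criterion is needed: since $n$ is Carmichael, $q-1\mid n-1$ and $r-1\mid n-1$; reducing $n=Pqr$ modulo $q-1$ and using $q\equiv 1$ gives $q-1\mid Pr-1$, and symmetrically $r-1\mid Pq-1$. Hence $C$ and $D$ are positive integers. (Also $n$ is odd, so $P,q,r$ are all odd; I will use this for parity below.)

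Next I would establish the algebraic backbone. Writing $\Delta=CD-P^2$ over the common denominator $(q-1)(r-1)$, the numerator is $(Pr-1)(Pq-1)-P^2(q-1)(r-1)$, which expands and collapses to $(P-1)\bigl(P(q+r-1)-1\bigr)$; thus $\Delta=(P-1)\bigl(P(q+r-1)-1\bigr)/\bigl((q-1)(r-1)\bigr)>0$, which is exactly the left inequality $P^2<CD$. To recover the formulas for $q$ and $r$, I would use the two groupings $P(q+r-1)-1=(Pq-1)+P(r-1)=(Pr-1)+P(q-1)$: dividing the first by $r-1$ yields $D+P$ and the second by $q-1$ yields $C+P$, so $(q-1)\Delta=(P-1)(P+D)$ and $(r-1)\Delta=(P-1)(P+C)$, which rearrange to the claimed expressions after adding $1$.

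For the range $2\le D<P<C$: from $r>q$ and $P>1$ we get $Pr-1\ge Pq-1>P(q-1)$, hence $C>P$; and $Pq-1<Pr-P=P(r-1)$ (using $r-q\ge 1$), hence $D<P$. The bound $D\ge 2$ is the one step that is not pure algebra: $D$ is a positive integer, and $D=1$ would force $Pq=r$, contradicting $r^2<n=Pqr$ from part (2.3) of the previous theorem; so $D\ge 2$.

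Finally, the right-hand inequality $CD<P^2(p_{d-2}+3)/(p_{d-2}+1)$ is equivalent to $\Delta<2P^2/(p_{d-2}+1)$. I would bound $\Delta<P^2(q+r-1)/\bigl((q-1)(r-1)\bigr)$ via $(P-1)\bigl(P(q+r-1)-1\bigr)<P^2(q+r-1)$, then use $p_{d-2}+1\le q-1$ (since $q=p_{d-1}>p_{d-2}$ and both are odd) together with $q+r-1\le 2(r-1)$ (since $q<r$) to conclude $\Delta<2P^2/(p_{d-2}+1)$. The only genuinely nonroutine point in the argument is spotting the definitions of $C$ and $D$ — equivalently, extracting the divisibilities $q-1\mid Pr-1$ and $r-1\mid Pq-1$ from Korselt's criterion — after which the proof is a single polynomial identity plus a handful of elementary inequalities.
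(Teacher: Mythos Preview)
The paper does not supply a proof of this theorem; it is quoted verbatim as Proposition~2 of Pinch~\cite{pinch15} and used as a black box. So there is no in-paper argument to compare against.

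That said, your proof is correct and essentially the canonical one. The key move---reading off $C=(Pr-1)/(q-1)$ and $D=(Pq-1)/(r-1)$ from the Korselt divisibilities $q-1\mid Pr-1$ and $r-1\mid Pq-1$---is exactly how Pinch obtains these quantities, and your verification of the identities $(q-1)\Delta=(P-1)(P+D)$ and $(r-1)\Delta=(P-1)(P+C)$ via the regrouping $P(q+r-1)-1=(Pq-1)+P(r-1)=(Pr-1)+P(q-1)$ is clean. The inequality chain for the upper bound on $CD$ is fine: the first step $(P-1)(P(q+r-1)-1)<P^2(q+r-1)$ is strict, so the subsequent non-strict steps $q+r-1\le 2(r-1)$ and $q-1\ge p_{d-2}+1$ still yield the required strict bound. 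One minor remark: for $D\ge 2$ you invoke $r<\sqrt{n}$ from (2.3), which is perfectly valid, but the more direct observation that $D=1$ forces $r=Pq$ to be composite also suffices and avoids the forward reference.
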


\begin{corollary}
There are only finitely many Carmichael numbers of the form $Pqr$ for a given $P$.  
\end{corollary}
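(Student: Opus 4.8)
The plan is to read finiteness straight off Theorem \ref{CDtheorem}. Fix $P$. Then the number of prime factors of $P$ is fixed, so the index $d$ (with $P = P_{d-2}$) and the largest prime factor $p_{d-2}$ of $P$ are determined by $P$ alone. By Theorem \ref{CDtheorem}, any Carmichael number of the form $Pqr$ arises from a pair of integers $(C,D)$ with $2 \le D < P < C$ and $P^2 < CD < P^2\frac{p_{d-2}+3}{p_{d-2}+1}$, with $q$ and $r$ then recovered from $C$, $D$, and $\Delta = CD - P^2$ by the displayed formulas. So it suffices to bound the number of admissible pairs $(C,D)$, since each such pair determines at most one triple $(P,q,r)$, hence at most one $n = Pqr$ (and most pairs will not even yield prime $q$, $r$, so this is a crude over-count, which is all we need).

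First I would note that $D$ lies in the finite set $\{2,3,\dots,P-1\}$. Next, for each fixed such $D$, the two-sided inequality on $CD$ confines $C$ to the open interval $\left(\frac{P^2}{D},\ \frac{P^2}{D}\cdot\frac{p_{d-2}+3}{p_{d-2}+1}\right)$, whose length is $\frac{P^2}{D}\cdot\frac{2}{p_{d-2}+1} \le P^2$; hence only finitely many integers $C$ are admissible for each $D$. Combining the two bounds gives at most $P^3$ admissible pairs $(C,D)$, and therefore only finitely many Carmichael numbers $n = Pqr$ for the given $P$.

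The only points needing a word of care are minor: $\Delta \ne 0$ since $P^2 < CD$ forces $\Delta \ge 1$, so the formulas for $q$ and $r$ are well defined; and one need not even use the exact constant $\frac{p_{d-2}+3}{p_{d-2}+1}$, since $p_{d-2} \ge 2$ makes it at most $\frac53$, giving the uniform bounds $CD < \frac53 P^2$ and $C < \frac56 P^2$ independent of the specific prime factorization of $P$. I do not expect a genuine obstacle here: the corollary is essentially a repackaging of the boundedness already built into Theorem \ref{CDtheorem}, and the substantive observation is simply that an inequality of the shape $P^2 < CD < (1+\varepsilon)P^2$ pins both $C$ and $D$ to bounded ranges once $P$ is fixed.
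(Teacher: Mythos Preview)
Your argument is correct and is exactly what the paper has in mind: the corollary is stated without a separate proof immediately after Theorem \ref{CDtheorem}, since the finiteness of admissible $(C,D)$ pairs is immediate from the constraints $2 \le D < P$ and $P^2 < CD < P^2\frac{p_{d-2}+3}{p_{d-2}+1}$, just as you spell out. (A minor aside: Carmichael numbers are odd, so in fact $p_{d-2} \ge 3$ and the ratio is at most $\tfrac32$ rather than $\tfrac53$, but this changes nothing for the finiteness conclusion.)
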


The immediately above theorem and corollary were the primary motivation of \cite{sw_ants2}.  Given the finiteness result, it makes sense to tabulate with respect to $P$.  Once a given $P$ is accounted for, it never needs to be revisited.  The permanence of the tabulation may help reduce the work of future tabulations.  As it stands, the below sections show that the above theorem is used in an algorithmic way when $P$ is accounted as small with respect to the total tabulation bound.  

\begin{theorem}[Proposition 3 of \cite{pinch15}]
We may bound the size of the last two primes $q$ and $r$.  
\begin{enumerate}
    \item $q < 2P^2$
    \item $r < P^3$
\end{enumerate}

\end{theorem}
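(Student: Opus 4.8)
The plan is to derive both bounds from Theorem~\ref{CDtheorem}, exploiting the inequality $P^2 < CD < P^2\left(\frac{p_{d-2}+3}{p_{d-2}+1}\right)$ together with the ordering $2 \leq D < P < C$ and the formulas for $q$ and $r$. First I would record that $\Delta = CD - P^2 > 0$, so $\Delta \geq 1$, and that since $D \geq 2$ and $C > P \geq p_{d-2}+1$ (as $P$ is a product of primes each at least $p_{d-2}$, in fact $P \geq p_{d-2}$ at minimum — I would check the precise relationship, but the key point is $P$ is not too small), the upper bound on $CD$ forces $CD$ to be only slightly larger than $P^2$, hence $\Delta$ is small relative to $P^2$.

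For the bound $q < 2P^2$: from $q = \frac{(P-1)(P+D)}{\Delta}+1$ and $\Delta \geq 1$, the naive estimate gives $q \leq (P-1)(P+D)+1$, which is too weak. Instead I would use that $\Delta = CD - P^2 \geq CD - P^2$ where $C > P$ forces $CD > PD$, so $\Delta > PD - P^2 = P(D-P)$ — but $D < P$ makes this negative and useless. The right move is to combine the two displayed identities: note $D = \frac{\Delta(q-1)}{P-1} - P$ and $C = \frac{\Delta(r-1)}{P-1} - P$, and then substitute into $CD < P^2\left(\frac{p_{d-2}+3}{p_{d-2}+1}\right)$. Since $q \leq r$ (because $D \leq C$), this yields an upper bound on $q$ in terms of $P$ and $\Delta$; I then need a lower bound on $\Delta$ in terms of the relevant quantities. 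Alternatively — and this is probably cleaner — since $CD \geq P^2 + 1$, i.e. $\Delta \geq 1$, but also $CD \geq C \cdot D \geq (P+1)\cdot 2 = 2P+2$ is weak; the genuinely useful lower bound is $\Delta = CD - P^2$, and from $CD < P^2 \cdot \frac{p_{d-2}+3}{p_{d-2}+1}$ we get $\Delta < \frac{2P^2}{p_{d-2}+1}$. Pairing this with $q - 1 = \frac{(P-1)(P+D)}{\Delta} < \frac{(P-1)(P+C)}{\Delta} = r-1$ and the constraint $D < P$, so $q - 1 < \frac{(P-1)\cdot 2P}{\Delta}$, and we need $\frac{2P(P-1)}{\Delta} \leq 2P^2 - 1$, which follows once $\Delta \geq 1$ — giving $q < 2P^2$ directly.

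For $r < P^3$: here $C$ can be as large as roughly $P^2$ (since $C D < P^2 \cdot \frac{p_{d-2}+3}{p_{d-2}+1}$ and $D \geq 2$ forces $C < \frac{P^2}{2}\cdot\frac{p_{d-2}+3}{p_{d-2}+1}$, but $C$ could also be bounded via part (2.1) of Proposition~1 as $p_{d-1} < (B/P_{d-2})^{1/2}$, though we want a $B$-free bound). From $r - 1 = \frac{(P-1)(P+C)}{\Delta} < \frac{(P-1)(P+C)}{1} < P(P+C) < P(P + P^2) < 2P^3$ after bounding $C < P^2$; to get the sharper $r < P^3$ one uses that $C \geq D+1$ wastes nothing, but more importantly that $\Delta \geq 1$ combined with $C < P^2$ (which itself needs justification: from $CD < 2P^2$ roughly and $D \geq 2$) gives $r - 1 < (P-1)(P + P^2) / 1$, and a slightly more careful accounting of the $(P-1)$ versus $P$ factors and the exact value $\frac{p_{d-2}+3}{p_{d-2}+1} < 2$ closes the gap.

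\textbf{Main obstacle.} The delicate point is extracting a good enough \emph{lower} bound on $\Delta = CD - P^2$: naively $\Delta \geq 1$, but turning the formulas for $q,r$ into the clean bounds $2P^2$ and $P^3$ requires either that $\Delta \geq 1$ suffices after exploiting $D < P < C$ tightly, or invoking the upper bound $CD < P^2\frac{p_{d-2}+3}{p_{d-2}+1}$ to control how large $C$ (equivalently $r$) can get. I expect the $r < P^3$ bound to be the tighter squeeze, since $r$ grows linearly in $C$ and $C$ can be on the order of $P^2$; the argument will hinge on showing $C < P^2$ from the $CD$-constraint and $D \geq 2$, and then checking the resulting inequality $\frac{(P-1)(P+C)}{\Delta} + 1 < P^3$ holds with room to spare when $\Delta \geq 1$.
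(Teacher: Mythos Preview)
The paper does not supply its own proof of this result; it is simply quoted as Proposition~3 of \cite{pinch15} and used as input to the subsequent discussion. So there is nothing in the paper to compare your argument against.

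That said, the route you sketch is the natural one and, once the false starts are stripped away, it works cleanly. For $q$: since $\Delta = CD - P^2$ is a positive integer one has $\Delta \geq 1$, and $D < P$ gives $P + D < 2P$, so
\[
q - 1 = \frac{(P-1)(P+D)}{\Delta} \leq (P-1)(P+D) < 2P(P-1) < 2P^2 - 1,
\]
hence $q < 2P^2$. For $r$: from $CD < P^2\cdot\frac{p_{d-2}+3}{p_{d-2}+1} < 2P^2$ (the ratio is $<2$ for every prime $p_{d-2}$) together with $D \geq 2$ one gets $C < P^2$; then
\[
r - 1 = \frac{(P-1)(P+C)}{\Delta} \leq (P-1)(P+C) < (P-1)(P + P^2) = P^3 - P,
\]
so $r < P^3$. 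Your proposal arrives at exactly these two estimates, but only after several detours (the attempt to bound $\Delta$ from below via $PD - P^2$, the substitution of $D$ and $C$ back into the $CD$ inequality, etc.) that are unnecessary. The whole argument rests on three trivial facts---$\Delta \geq 1$, $D < P$, and $C < P^2$---and you eventually identify all three; the ``main obstacle'' you flag is not an obstacle at all, since $\Delta \geq 1$ already suffices.
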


The computational cost of an unbounded direct search is $\Omega(P^2)$ due to the cost of finding $q$.   It is difficult to pin down the cost of finding $r$ given $P$ and $q$ due to the role $\lambda(P_{d-1})$ plays.  We say more in Section \ref{sec:P_large}.  An implementation will always have access to $\lambda(P_{d-1})$ because $P_{d-1}$ is always constructed in factored form.

\subsection{Computational Model}

We are concerned with the counts of arithmetic operations used to generate triples $(P, q, r)$ and do not concern ourselves with the asymptotic cost of verifying if $Pqr$ is a Carmichael number.  The primary reason we omit the cost of verification is that the cost is not uniform across the computation.  We will treat the cost of verification as being done in constant time as a guide.  

The primality test is the most expensive part of the Korselt check and we deferred it until other checks have passed.  This includes integrality checks for $q$ and $r$ and the $d < 14$ divisibility statements in the Korselt criterion. We may also discard candidates beyond the tabulation bound.  Even when all of these pass, and we need to verify the primality by use of a more expensive test, in some cases we know the complete (or partial) factorizations of $q-1$ or $r-1$ and have faster primality tests available.

We will account for storage costs using the bit model.  And, as is typical in tabulation algorithms, we do not count the cost of storing the output.

\section{Algorithmic Overview}\label{sec:alg_over}

For reference, we provide a high-level overview of the algorithm that is described across several sections below.  We want to analyze each piece, justify our choices, and compare to the previous algorithms.   We conclude with an overall asymptotic analysis in Section \ref{sec:analysis}.  

\begin{algorithm}[H]  
\renewcommand{\thealgorithm}{}
\caption{Tabulating Carmichael numbers less than $B$}\label{alg:cap}
\begin{algorithmic}
\Require $0 < X < B $, $X$ a cross-over point  \Comment{See Section \ref{subsec:benefits}.}
\ForEach{$P < X$} \Comment{See Section \ref{subsec:P}.}
    \If{ $P_{d-3}$ is small}
        \State Use Hybrid method to find $q$ and $r$.  \Comment{See Sections \ref{subsec:cd}-\ref{subsec:hybrid}.}
    \Else
        \State Use $D\Delta$ method to find $q$ and $r$.  \Comment{See Section \ref{subsec:ddelta}.}
    \EndIf
\EndFor
\ForEach{$P$ satisfying $P \geq X $ and $Pp_{d-2}^2 < B$ } \Comment{See Section \ref{subsec:Pdm1}.}
    \ForEach {$q$ an admissible prime in $(p_{d-2}, \sqrt{B/P})$}
    \If{ $B/(Pq\lambda(Pq))$ or $Pq/\lambda(Pq)$ is small }
       \State Compute $r$ by sieving.   \Comment{See Section \ref{subsec:bnd_lrg}.}
     \Else
       \State Factor $Pq-1$ using divisors in residue classes.  \Comment{See Section \ref{subsec:faster}} 
    \EndIf
    \EndFor
\EndFor
\end{algorithmic}
\end{algorithm}

\section{When \texorpdfstring{$P$}{} is small} \label{sec:P_small}

We present a concise and clear exposition of the key ideas found in \cite{sw_ants2}.  In particular, the proofs of the run-times make it a bit more clear what the implied constants are for the computation associated for a given $P$.  

\subsection{ The \texorpdfstring{$CD$}{}  method } \label{subsec:cd}
The starting point is the third paragraph of Section 2 of \cite{pinch15} where the small case is described.   
Theorem \ref{CDtheorem} is used to create the values $C$ and $D$ with a double nested loop.  
The outer loop is on $D$ and the inner loop is on $C$ permitted by inequality $(3)$ of Theorem \ref{CDtheorem}.  
With $C$, $D$, and $\Delta$, the inner loop does divisibility checks on $q$ and $r$, an optional bound check on $Pqr$,  Korselt divisibility checks, and two primality checks.

In \cite{sw_ants2}, it was shown that the number of $CD$ pairs created is akin to running a sieve of Eratosthenes on the interval found in the inequality $(3)$ of Theorem \ref{CDtheorem}.   Letting the length of that interval be $L_P$, then its value is about $2PP_{d-3}$.  Considering the $d=3$ case, we have $P_{d-3} = 1$ and so, $L_p$ is linear in $P$.  As $d$ grows larger, $L_P$ can approach quadratic in $P$ because $P_{d-3}$ can approach $P^{1 - \frac{1}{d-2}}$.

The number of $CD$ pairs created in this approach is 
\[O(L_P \log P) = O(P P_{d-3} \log P) = O(P^{2 - \frac{1}{d-2} }\log P).\]  The first equality is just rewriting $L_P$ in terms of $P$ and $P_{d-3}$, and would also be a $\Theta$ bound.  The second equality is a containment statement of big-$O$ costs, since it represents an upper bound on $P_{d-3}$.  This method performs poorly when $P_{d-3}$ is large with respect to $P$.    

\subsection{The \texorpdfstring{$D\Delta$}{} method} \label{subsec:ddelta}

The improvement found in \cite{sw_ants2}, is to replace the loop on $C$ with a loop on $\Delta$, constructed divisors of $(P-1)(P+D)$.  The divisors are obtained using a variant of the sieve of Eratosthenes.  The factorization of $(P-1)$ remains constant through all $D$ and the sieve variant factors all numbers in $[P+2, 2P-1]$,  which accounts for the factors of any $(P+D)$ term.  Even though the cost of obtaining the divisors exceeds the cost to increment $C$, the asymptotically fewer number of the divisors makes up for increased overhead used to enter the inner loop.  Like the inner loop of the $CD$ method, this approach does divisibility checks on $C = (P^2 + \Delta)/D$ and $r$, an optional bound check, Korselt divisibility checks, and two primality checks.  

\begin{theorem}
The number of $D \Delta$ pairs created is $O( \tau(P-1) P \log P)$.
\end{theorem}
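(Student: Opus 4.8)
The plan is to bound the number of pairs by summing, over each value of $D$ in the outer loop, the number of admissible $\Delta$ produced in the inner loop, and then to control that sum using the submultiplicativity of $\tau$ together with Dirichlet's divisor estimate recalled above.

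First I would pin down exactly what is being counted. In the $D\Delta$ method the outer loop runs over $D$ with $2 \le D < P$, and for each such $D$ the inner loop runs over those divisors $\Delta$ of $(P-1)(P+D)$ satisfying the size constraint coming from inequality $(3)$ of Theorem \ref{CDtheorem}, namely $0 < \Delta < P^2\bigl(\tfrac{p_{d-2}+3}{p_{d-2}+1} - 1\bigr)$; the integrality test on $C = (P^2+\Delta)/D$ and the subsequent Korselt and primality checks are applied only \emph{after} the pair $(D,\Delta)$ has been created. Hence the number of pairs created is at most
\[
\sum_{D=2}^{P-1} \tau\bigl((P-1)(P+D)\bigr).
\]

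Next I would use that $\tau$ is submultiplicative, $\tau(ab) \le \tau(a)\tau(b)$, to get $\tau\bigl((P-1)(P+D)\bigr) \le \tau(P-1)\,\tau(P+D)$. Since $\tau(P-1)$ does not depend on $D$ it factors out of the sum, leaving $\tau(P-1)\sum_{D=2}^{P-1}\tau(P+D) = \tau(P-1)\sum_{m=P+2}^{2P-1}\tau(m)$, which matches the description that the sieve variant factors every integer in $[P+2,\,2P-1]$. Finally, writing this last sum as a difference of two partial sums of $\tau$ and invoking $\sum_{n\le x}\tau(n) = x\log x + (2\gamma-1)x + O(\sqrt{x})$ yields $\sum_{m=P+2}^{2P-1}\tau(m) = P\log P + O(P) = O(P\log P)$, and therefore the count is $O\bigl(\tau(P-1)\,P\log P\bigr)$.

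The argument is essentially routine once the counting is set up correctly, so the only real care is in the bookkeeping: making sure that ``pairs created'' refers to the loop iterations before the integrality filter on $C$, that submultiplicativity (rather than full multiplicativity, which can fail when $\gcd(P-1,P+D)>1$) is what is actually available, and that the telescoping of Dirichlet's formula over the interval $[P+2,2P-1]$ really does collapse to $\Theta(P\log P)$, with the two $x\log x$ leading terms cancelling down to a single $P\log P$. I do not expect a genuine obstacle here; if one wanted the sharper constant implicit in the $O$, one would simply track the $2\log 2 + 2\gamma - 1$ coefficient of the linear term, but that refinement is not needed for the stated bound.
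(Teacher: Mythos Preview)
Your proposal is correct and follows essentially the same route as the paper: bound the count by $\sum_{D=2}^{P-1}\tau\bigl((P-1)(P+D)\bigr)$, use submultiplicativity of $\tau$ to pull out $\tau(P-1)$, and then apply Dirichlet's divisor estimate to the remaining sum over the interval near $[P,2P]$. The paper is terser and uses slightly looser index ranges, but the argument is the same.
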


\begin{proof}
For $P$ we run through $D$ in the interval $[2, P-1]$ and consider divisors of $(P-1)(P+D)$.  So we count the number of such divisors.  
\begin{eqnarray*}
\sum_{D=2}^{P-1} \tau \left( (P-1)(P+D ) \right) <& \displaystyle{  \tau(P-1)  \left(\sum_{ D = P}^{2P} \tau(D) \right) }\\
<& \displaystyle{  \tau(P-1)  \left(\sum_{ D < 2P}\tau(D)  - \sum_{ D < P}\tau(D)  \right) }\\
=& \tau(P-1)\left( P \log (P)+  O(P)\right) .
\end{eqnarray*}

\end{proof}

There are a few practical improvements to the above to further reduce the number of divisors constructed.  We may consider divisors of $(P-1)/2$ instead of $(P-1)$ because this forces $q$ to be an odd integer.  The set of divisors may be further reduced considering a small prime $p$ dividing $D$ and only generating $\Delta$ satisfying  $P^2 \equiv -\Delta \pmod{p}$ for small primes $p$ dividing $D$.  If $P \equiv 0,1 \pmod{p}$, we either always include or exclude $p$ from the odometer generating the divisors.  It is not too hard to see that both of these ideas only improve the implied constant on the big-$O$ cost.

\subsection{The Hybrid Method} \label{subsec:hybrid}

Since the outer loop on $D$ is the same in both methods, one may choose which inner loop is better to enter.  It is easy to compute the number of $C$ values or the number of divisors $\Delta$ that would have to be considered for a given $D$.  Roughly speaking, the choice will be between the smaller of $2PP_{d-3}/D$ or $\tau( (P-1)(P+D) )$.  While  $2PP_{d-3}/D$ decreases in a predictable way, the divisor count can behave more chaotically.   Generally, when $D$ is small the $D\Delta$ method will be chosen.  As $D$ gets larger, it may be that the $CD$ method becomes the faster way to complete the computation.  It was common for $P$ to be prime in our computational range; so, we found that implementing this hybrid method was helpful in reducing run-time.  In \cite{sw_ants2}, it is conjectured that the hybrid method considers $O(P \log \log P)$ $CD$ or $D\Delta$ pairs when $P$ is a prime.  If true, the hybrid method is an asymptotically faster algorithm for these cases.  

\subsection{Benefits and Crossover}\label{subsec:benefits}

The benefit of the $D \Delta$ method and the hybrid method is that it removes the dependence on $d$.  Another benefit that all three have is that they do not depend on the size of $\lambda(P)$.

It also makes deciding when to use these algorithms a bit easier.  There are two upper bounds on $q$ given $P$ and $B$.  On the one hand $q < 2P^2$ and on the other hand $Pq^2 < B$.  The cost of finding $q$ is roughly linear in the upper bound.  We can find $q$ (and $r$) by checking, on average,  $O( P (\log P)^2 )$ candidate pairs $D\Delta$.  Or we can find $q$ by exhaustive search on a precomputed lists of primes bounded by $\sqrt{B/P}$.  The number of elements in this list is bounded by 
\[ \pi(\sqrt{B/P}) \sim \frac{ \sqrt{B/P}}{ \log \sqrt{B/P}}. \] 
These two counts are equalized, roughly, at $P \sim B^{1/3}/(\log B)^2$.  In practice, we recommend a crossover of $X = B^{1/3}$.  Our own computation went a bit beyond this.  This was due, in part, because when we started the small case, we were unsure what the upper bound of the total tabulation was going to be.   As a guide, the cost of finding $r$ given $Pq$ is a polynomial time algorithm.  However, if the guidance is incorrect then choosing $X$ larger than $B^{1/3}$ would be justified.  

It is not clear to us if it is possible to provide an asymptotically faster algorithm for the small case by assuming a bounded computation.
 That is, as $P$ grows beyond $B^{1/6}$ it becomes increasingly likely that $Pqr > B$. 
 It would be good to have a modification that can incorporate this bound and results in an asymptotically faster algorithm.    

\section{When \texorpdfstring{$P$}{} is large}\label{sec:P_large} 

We show that the cost of completing individual preproducts $Pq$ may be exponential (see Theorem \ref{pinch_large}) in the worse case, but that many of these may be resolved in polynomial-time (see Subsections \ref{subsec:faster} and \ref{subsec:faster2} ).   Even the cases that may not be resolved in polynomial-time have asymptotically superior algorithms (see Theorem \ref{better_factoring}) compared to Pinch's approach.  The difficulty of finding $r$ given $P$ and $q$ is related to both $B$ and $\lambda(Pq)$.  In all of what follows, the goal is to use the two conditions on $r-1$: 

\begin{enumerate}
    \item $r-1$ divides $Pq-1$ and
    \item $r - 1 \equiv r^{\star} - 1 \pmod{ \lambda(Pq)}$ where $0 < r^{\star} < \lambda(Pq)$ is the modular inverse of $Pq \pmod{\lambda(Pq)}$.
\end{enumerate}

The first two subsections explain the simplest approaches to finding $r$;  both of which are described in \cite{pinch15}.   We prove that this can be an exponential time algorithm in $P$ (see Theorem \ref{pinch_large}).  The next two subsections establish several improvements each of which would change many of these individual exponential time computations to polynomial time ones.  Finally, we conclude with some comments on how Carmichael numbers that result from the Erd\H{o}s construction technique result in the worst case inputs for this tabulation problem.  Our algorithm can find such numbers, but we find it especially pleasing that they play a central role (as an obstacle!) in the tabulation algorithm.  

\subsection{Pinch's Easy Case}\label{subsec:bnd_lrg}

Using the congruence on $r-1$, we may take steps in an arithmetic progression to find candidate $r$ values.  The number of steps required is bounded by 
\[ k = \min \left\{  \frac{Pq - 1}{\lambda(Pq)}, \frac{B}{Pq\lambda(Pq)} \right\}.\]
The first term comes from the fact that $r$ is a factor of $Pq-1$ and the second term comes from the fact that $r < B/Pq$.  If either of these numbers is small, we take the $k$ steps in this arithmetic progression.  Recall our guidance that $\lambda(Pq)$ is close in size to $Pq$.  Therefore, it is common for the first ratio to be small.  In a similar way, when $Pq > B^{1/2}$, then the second ratio is expected to be small.  Most of the preproducts fall into one of these two cases and we consider this the bounded case.  For the rest of the section we assume both of these ratios are large.  Therefore, everything that follows is considered to be in the unbounded case.

\subsection{Pinch and Trial Division}\label{pinch_trail}

Richard Pinch notes that checking for candidates in arithmetic progression
\[r^{\star}, r^{\star} + \lambda(Pq), r^{\star} + 2\lambda(Pq), r^{\star} + 3\lambda(Pq), \ldots \]
may be sped up considerably by running over small factors $f$ of $Pq - 1$ to find candidates for $r$.  Put $r = (Pq - 1)/f + 1$, then test the congruence.  Testing small factors $f$ lowers the upper bound on the arithmetic progression.  

\begin{theorem} \label{pinch_large}
Finding $r$ in the manner described by Richard Pinch creates 
\[ O\left(\sqrt{\frac{Pq}{\lambda{(Pq)}}}\right) \]
candidates to check. 
\end{theorem}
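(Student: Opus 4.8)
The plan is to count the total number of candidate $r$ values produced by Pinch's two-pronged strategy: the candidates coming from small factors $f$ of $Pq-1$, plus the remaining candidates swept out by the arithmetic progression with common difference $\lambda(Pq)$. I would introduce a cutoff parameter $T$, say that the ``small factors'' are exactly the divisors $f$ of $Pq-1$ with $f \le T$, and observe that each such $f$ yields at most one candidate $r = (Pq-1)/f + 1$, so this phase contributes at most $\tau(Pq-1)$ candidates, and in any case at most $T$ of them (since distinct divisors $\le T$ are distinct integers). The point of using divisors $f \le T$ is that any genuine factorization $r-1 = (Pq-1)/f$ with $f \le T$ has already been found; hence in the arithmetic-progression phase we only need to look for $r$ with $r - 1 > (Pq-1)/T$, i.e. $r > (Pq-1)/T + 1$.

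Next I would bound the arithmetic-progression phase. The candidates there are the terms $r^\star + j\lambda(Pq)$ for $j = 0, 1, 2, \dots$, and we only care about those exceeding $(Pq-1)/T$; but we also only care about those that could be a factor of $Pq-1$, so the largest relevant one is at most $Pq$. Hence the number of terms in this progression that we must examine is at most
\[
\frac{Pq - (Pq-1)/T}{\lambda(Pq)} + O(1) = O\!\left(\frac{Pq}{T\,\lambda(Pq)}\right).
\]
So the total candidate count is $O\!\left(T + \dfrac{Pq}{T\,\lambda(Pq)}\right)$. This is a sum of an increasing and a decreasing function of $T$, minimized (up to constants) by balancing the two terms, which gives $T \asymp \sqrt{Pq/\lambda(Pq)}$ and total cost
\[
O\!\left(\sqrt{\frac{Pq}{\lambda(Pq)}}\right),
\]
as claimed.

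I expect the main obstacle to be stating precisely what ``small factors $f$'' means and making the bookkeeping of the two phases genuinely complementary — i.e. justifying that once all divisors $f \le T$ have been tested, the arithmetic progression only needs to run until $r-1$ drops below $(Pq-1)/T$, rather than all the way down to small values. One has to be slightly careful that $r^\star$ (the residue of $r-1$ mod $\lambda(Pq)$) can be any value in $(0,\lambda(Pq))$, so the progression is $r - 1 \equiv r^\star - 1 \pmod{\lambda(Pq)}$ and we are walking it upward from its smallest positive representative; the count of terms below a given height $H$ is $H/\lambda(Pq) + O(1)$ regardless of the offset, which is all that is needed. A secondary point worth a sentence is that $T$ need not be an integer or an actual divisor of $Pq-1$; it is just a threshold, and the bound $\#\{f \mid Pq-1 : f \le T\} \le \lfloor T\rfloor$ is trivial. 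Everything else is the elementary optimization of $T + c/T$.
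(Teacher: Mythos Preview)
Your approach is the same as the paper's: introduce a cutoff, count trial-division candidates and arithmetic-progression candidates separately, and balance. The paper writes the total as $O\bigl(k + (Pq-1)/(k\,\lambda(Pq))\bigr)$ and optimizes at $k=\sqrt{Pq/\lambda(Pq)}$, exactly your $T$.

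However, your bookkeeping has the two phases swapped. If $f \le T$ then $r-1 = (Pq-1)/f \ge (Pq-1)/T$, so the trial-division phase captures the \emph{large} values of $r-1$, not the small ones. Consequently the arithmetic progression only needs to sweep $r-1 \le (Pq-1)/T$, not $r-1 > (Pq-1)/T$ as you wrote. Your displayed count
\[
\frac{Pq - (Pq-1)/T}{\lambda(Pq)} + O(1)
\]
is consistent with your (reversed) inequality but is $\Theta\bigl(Pq/\lambda(Pq)\bigr)$ for any $T\ge 2$, not $O\bigl(Pq/(T\lambda(Pq))\bigr)$; so the simplification on that line is false as written. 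Once you flip the inequality to $r-1 \le (Pq-1)/T$, the progression count becomes $(Pq-1)/(T\lambda(Pq)) + O(1)$ and everything goes through. This is a local slip rather than a structural gap, but as stated the argument does not establish the bound.
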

\begin{proof}
We balance $k$ trial division steps with checking $k$ elements in arithmetic progression.  The number of candidates created is 
\[ O\left(k + \frac{Pq-1}{k\lambda(Pq)}\right).  \]
The first term of the sum represents the number of candidates by taking $k$ steps of trial division.  The second term counts the number of elements in arithmetic progression with common difference of $\lambda{(Pq)}$ that are bounded in size by $(Pq-1)/k$.  These two terms are balanced at 
\[k = \sqrt{\frac{Pq}{\lambda(Pq)}}. \]
\end{proof}

We follow Richard Pinch in spirit but show that there is a better way of finding the small factors $f$ that divide $Pq-1$.  This change provides an asymptotically superior algorithm.

\subsection{Divisors in Residue classes (Part 1)} \label{subsec:faster}
Let us rename quantities.  Let $g = \gcd( r^{\star} -1, \lambda(Pq))$.  Then, rather than search for $r-1$ directly, we will search for $r-1$ as a multiple of $g$.  Renaming quantities $\mathcal{R}_1 = (r^{\star} - 1)/g, \mathcal{S} = \lambda(Pq) /g,$ and $\mathcal{P} = (Pq-1)/g$, the problem may be restated:  Find factors of $\mathcal{P}$ that are in the residue class $\mathcal{R}_1 \pmod{\mathcal{S}}$.  This is the divisor in residue class problem.  We will explain and analyze what we did and how it compares with Pinch's approach.

The search for divisors $t$ of $\mathcal{P}$ may be broken into two cases.  Case 1:  $t \leq \sqrt{\mathcal{P}}$.  Case 2:  $t > \sqrt{\mathcal{P}}$.  In naive trial division, any successful division always yields two factors:  one in each of the two cases.  Since we are only concerned with divisors in specific residue classes, the two cases are treated independently.  

\subsubsection{Case 1} We construct divisors in the arithmetic progression
\[ \mathcal{R}_1, \mathcal{R}_1 + \mathcal{S}, \mathcal{R}_1 + 2 \mathcal{S}, \mathcal{R}_1 + 3\mathcal{S}, \mathcal{R}_1 + 4 \mathcal{S}\ldots \]
while these candidates remain less than or equal to $\sqrt{\mathcal{P}}$.  Multiply by $g$ and add $1$ to get the original arithmetic progression for $r$:  
\[ r^{\star}, r^{\star}+ 2\lambda(Pq) , r^{\star}+ 3\lambda(Pq),  \ldots.\]
This arithmetic progression does not deviate from the original approach.  The renaming is only necessary to compute the new upper bound.  

\subsubsection{Case 2}  Suppose we have $t >\sqrt{\mathcal{P}}$ and $t \equiv \mathcal{R}_1 \pmod{\mathcal{S}}$.  That means that there is a factor, say $t'$ with $t(t') = \mathcal{P}$ and $t' < \sqrt{\mathcal{P}}$.  This $t'$ lies in some residue class modulo $\mathcal{S}$.  If $(\mathcal{R}_1 + k_1\mathcal{S}) \mid \mathcal{P}$ then there exists $t' = \mathcal{R}_2 + k_2\mathcal{S}$ such that $(\mathcal{R}_1 + k_1\mathcal{S})(\mathcal{R}_2 + k_2\mathcal{S}) = \mathcal{P}$.  A simple computation shows that $\mathcal{R}_2 \equiv \mathcal{P}\mathcal{R}_1^{-1} \pmod{\mathcal{S}}$.  We construct divisors in the arithmetic progression
\[ \mathcal{R}_2, \mathcal{R}_2 +  \mathcal{S}, \mathcal{R}_2 +  2\mathcal{S}, \mathcal{R}_2 +  3\mathcal{S}, \mathcal{R}_2 +  4\mathcal{S}, \ldots \]
while these candidates remain less than $\sqrt{\mathcal{P}}$.  At each point, we check if $\mathcal{P} \equiv 0 \mod{(\mathcal{R}_2 +  k_2\mathcal{S} )}$.  If it is, then $t  = \mathcal{P}/ (\mathcal{R}_2 +  k_2\mathcal{S})$ is the divisor greater than $\sqrt{\mathcal{P}}$.   Multiply the quantity by $g$ and add $1$ to get $r$.  This is the same as searching for $r$ in 
\[ \frac{Pq-1}{\mathcal{R}_2} + 1,\frac{Pq-1}{\mathcal{R}_2 + \mathcal{S} } + 1  ,\frac{Pq-1}{\mathcal{R}_2 + 2\mathcal{S} } + 1 ,\frac{Pq-1}{\mathcal{R}_2 + 3\mathcal{S} } + 1, \ldots  \]
where the divisions need to be exact.  These denominators are a subset of the ``small factors $f$ of $Pq-1$" described by Pinch.  Because these denominators are constructed in arithmetic progression we have fewer candidate $r$ values to construct.  

\begin{theorem}\label{better_factoring}
Finding $r$ in the manner described above creates 
\[ O\left(\frac{\sqrt{gPq}}{\lambda{(Pq)}}\right) \]
candidates to check. 
\end{theorem}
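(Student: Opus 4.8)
The plan is to bound separately the number of candidate $r$-values produced by Case 1 and by Case 2, add the two counts, and then rewrite the result in terms of $P$, $q$, $g$, and $\lambda(Pq)$. In both cases the candidates examined are the successive terms of an arithmetic progression with common difference $\mathcal{S} = \lambda(Pq)/g$, truncated as soon as the terms exceed $\sqrt{\mathcal{P}}$, where $\mathcal{P} = (Pq-1)/g$; so each case contributes $O(\sqrt{\mathcal{P}}/\mathcal{S})$ candidates, and the whole argument reduces to this one estimate plus a change of variables.

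First I would record two preliminary facts. Because $g = \gcd(r^{\star}-1, \lambda(Pq))$, the quotients $\mathcal{R}_1 = (r^{\star}-1)/g$ and $\mathcal{S} = \lambda(Pq)/g$ are coprime, so $\mathcal{R}_1$ is a unit modulo $\mathcal{S}$ and the residue $\mathcal{R}_2 \equiv \mathcal{P}\mathcal{R}_1^{-1} \pmod{\mathcal{S}}$ used in Case 2 is well defined. Moreover, if $t t' = \mathcal{P}$ with $t \equiv \mathcal{R}_1 \pmod{\mathcal{S}}$, then necessarily $t' \equiv \mathcal{R}_2 \pmod{\mathcal{S}}$, which is precisely the observation that makes Case 2 capture exactly the divisors above $\sqrt{\mathcal{P}}$. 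Together the two truncated progressions therefore enumerate every divisor of $\mathcal{P}$ in the class $\mathcal{R}_1 \pmod{\mathcal{S}}$, so counting their terms bounds the number of candidates to check.

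Next, for Case 1 the candidates are $\mathcal{R}_1, \mathcal{R}_1 + \mathcal{S}, \mathcal{R}_1 + 2\mathcal{S}, \ldots$ up to $\sqrt{\mathcal{P}}$, hence their number is at most $\lfloor (\sqrt{\mathcal{P}} - \mathcal{R}_1)/\mathcal{S}\rfloor + 1 \le \sqrt{\mathcal{P}}/\mathcal{S} + 1$; Case 2 is bounded in the same way by $\sqrt{\mathcal{P}}/\mathcal{S} + 1$ (a divisor equal to $\sqrt{\mathcal{P}}$, in the edge case that $\mathcal{P}$ is a perfect square, is counted by at most one case and does not affect the bound). Adding gives a total of $O(\sqrt{\mathcal{P}}/\mathcal{S})$ candidates, and substituting the definitions of $\mathcal{P}$ and $\mathcal{S}$,
\[ \frac{\sqrt{\mathcal{P}}}{\mathcal{S}} = \frac{\sqrt{(Pq-1)/g}}{\lambda(Pq)/g} = \frac{\sqrt{g(Pq-1)}}{\lambda(Pq)} \le \frac{\sqrt{gPq}}{\lambda(Pq)}, \]
which is the claimed bound.

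This is essentially a routine counting estimate, so I do not expect a serious obstacle. The only points that need care are: the bookkeeping of the additive $+1$ coming from truncating each progression (harmless in the unbounded regime, where $\sqrt{\mathcal{P}}/\mathcal{S}$ is the dominant term, and otherwise absorbed into the $O$-constant); the verification that $\mathcal{R}_1$ is a unit modulo $\mathcal{S}$, without which Case 2 would not even be meaningful; and keeping straight that we are counting \emph{candidates examined} — the lengths of the two arithmetic progressions — rather than divisors actually found, so that the bound is genuinely an upper bound on the work done.
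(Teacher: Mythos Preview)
Your proposal is correct and follows essentially the same approach as the paper's proof: bound each of the two arithmetic progressions by $\sqrt{\mathcal{P}}/\mathcal{S}$ terms and then substitute $\mathcal{P}=(Pq-1)/g$ and $\mathcal{S}=\lambda(Pq)/g$. You are simply more careful than the paper about the additive $+1$, the perfect-square edge case, and the invertibility of $\mathcal{R}_1$ modulo $\mathcal{S}$; the paper's proof is a terse version of exactly this argument.
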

\begin{proof}
Each arithmetic progression has the same form.  Therefore, the candidates created in each class is already balanced and all that remains is to count their number.  The inequality on $k$ bounds the number of candidates:     
\[ \mathcal{R} + k\mathcal{S} < \sqrt{\mathcal{P}} \mbox{ implies }  k < \frac{\sqrt{\mathcal{P}}}{\mathcal{S}} < \frac{\sqrt{gPq}}{\lambda{(Pq)}} .  \]
\end{proof}
Notice that the $\lambda{(Pq)}$ term is now outside of the radical at the expense of $g$ appearing in the radical.  

The sequence of numbers in Case 1 is increasing and the sequence of numbers in Case 2 is decreasing.  We may generate the numbers in Case 2 in increasing order by starting at the maximum allowed value of $k_2$ and working down to $0$.   Since we do not want to create factors that exceed $B/(Pq)$, the fact that this algorithm generates factors by size is an important feature.   It is straight-forward to implement an early-abort feature to avoid generating factors that are too big.  

This incorporates an insight of \cite{div_in_class, div_in_class_con} that if $\mathcal{S} > \mathcal{P}^{1/2}$, then there are only two possible divisors (those being $\mathcal{R}_1$ and $\mathcal{P}/\mathcal{R}_2$) and that they are found in polynomial time.  Given our guidance about the size of $\lambda(Pq)$, the expected case is for there to be only two possible divisors.  

\subsection{Divisors in Residue classes (Part 2)} \label{subsec:faster2}

If $\mathcal{S} > \mathcal{P}^{1/3}$, then there is also a polynomial time algorithm to find all possible factors that are $\mathcal{R}_1 \pmod{\mathcal{S}}$  based off of solving a logarithmic number of quadratic equations \cite{div_in_class}.  This result was improved in \cite{div_in_class_con}; they used LLL lattice reduction techniques to find factors in polynomial time if $\mathcal{S} > \mathcal{P}^{1/4}$.    In \cite{cranpom}, the authors of \cite{div_in_class_con} are reported to have said that it is practical for $\mathcal{S} > \mathcal{P}^{0.29}$.  

We implemented the version of \cite{div_in_class} described as Algorithm 4.2.11 in \cite{cranpom}.  A remark (changing the notation to be consistent with our own) found in \cite{cranpom} reads, ``If $\mathcal{S} < \mathcal{P}^{1/3}$, Algorithm 4.2.11 still works, but the number of square root steps is then $O(\mathcal{P}^{1/3}\mathcal{S}^{-1} \ln \mathcal{P}).$"  We do not believe this is true of a naive implementation of Algorithm 4.2.11.  The central claim involves the length of the interval $[ 2a_ib_i, a_ib_i + \mathcal{P}/\mathcal{S}^2]$.  This interval has length $O(\mathcal{P})$ when $\mathcal{S}$ is small.  We observed this nearly-linear run-time for especially small values of $\mathcal{S}$.  However, one may choose a multiplier $m$ so that $m\mathcal{S}$ is of the size $\mathcal{P}^{1/3}$.  Invoking Algorithm 4.2.11, $m$ times with modulus $m\mathcal{S}$ will give the result found in the remark. 

It is further noted that ``it remains an open question whether an efficient algorithm can be found that finds divisors of $\mathcal{P}$ that are congruent to $\mathcal{R}_1 \pmod{\mathcal{S}}$ when $\mathcal{S}$ is about $\mathcal{P}^{1/4}$ or smaller."  Our problem is, frequently, a bit more restricted than even this question.  We only want to find factors bounded by $B/Pq$.  The fastest algorithm we are aware of that can use both  residue class information and incorporate a bounded search would be a customized version of the Pollard-Strassen polynomial evaluation method. 

\subsection{Worst Case Inputs} \label{bad_problems}

Suppose $Pq$ is a Carmichael number.  In this case, $r^{\star} - 1 = 0$ and so we have no residue class information to use; the problem is to factor $(Pq-1)/\lambda(Pq)$.  By the Erd\H{o}s construction, it is possible to make Carmichael numbers with $\lambda(Pq)$ arbitrarily small as a power of $Pq$.  To make matters worse, $Pq$ may be a Carmichael number less than $B^{2/3}$.  In this case, there is no relevant bound information.  It is truly a general integer factorization problem.  Since this is a worst case input to our algorithm, it would make sense to analyse the asymptotic density of these numbers to know how often we encounter them.  On the one hand, we could consult current tabulations as a guide for how often we expect to have these inputs.  On the other hand, the asymptotic count remains a key open question in analytic number theory.

\section{Bound Admissible Preproducts} \label{sec:bndadmis}

In Sections \ref{sec:P_small} and \ref{sec:P_large}, we make the case for a bifurcated approach for tabulation.  In each case, we studied the problem as the following:  Given $P$ find $q$ and $r$ so that $Pqr$ is a Carmichael number.  In Section \ref{sec:P_small}, $q$ and $r$ are found simultaneously.  In Section \ref{sec:P_large}, we see that the tabulation bound $B$ implies that an exhaustive search for $q$, followed by a constructive search for $r$ is reasonable.  In both cases, this involves finding $P$ efficiently.  A bound admissible preproduct $P_k$ for $k < d$ is a cyclic number where $P_k p_k^{d-k} < B$.  In Section \ref{sec:analysis}, we show that due to the count of bound-admissible preproducts, care must be taken in generating these numbers.  As with the tabulation itself, our approach is bifurcated.    

\subsection{Bound admissible \texorpdfstring{$P$}{} } \label{subsec:P}
We need to ensure that $P$ is a cyclic number.  This is easy to do if we have access to the prime factorization of $P$.  We say $P$ is bound admissible if $Pp_{d-2}^2 < B$.  This means that any cyclic number less than $B^{1/3}$ will be bound admissible.  As we consider cyclic numbers greater than $B^{1/3}$, it becomes more difficult for a cyclic number to be bound admissible.  For instance, no prime number greater than $B^{1/3}$ is bound admissible.  

Given the relatively slow growth of $\log \log \log B$ and the need to have factorizations of all numbers through $2P-1$ in Section \ref{sec:P_small}, a variant of the sieve of Eratosthenes that factors all numbers was used.  The cost of invoking this is a lower-order asymptotic cost in the overall run-time of this portion of the algorithm.  

\subsection{Bound admissible \texorpdfstring{$P_{d-1}$}{}}\label{subsec:Pdm1}
Assume all $P \leq X$ were accounted as small.  Then a bound admissible $P_{d-1} = Pq$ satisfies $Pq^2 < B$ and $P > X$.  We generate $P$ and $q$ from a table of primes.  Let $P_{d-1}$ have $k > 2$ prime factors.  We know $k \not = 2$, because that case was considered as small.  For the $d-2$ factors that comprise the primes in $P$, we need to ensure at each level that the product is bound-admissible, cyclic, and will be greater than $X$.  For the $(d-1)^{\rm{th}}$ prime $q$, we ensure the product is still cyclic and is bound admissible.   

In Section \ref{sec:P_large}, we showed that $q$ is in the interval $(p_{d-2}, \sqrt{B/P}) \subset (p_{d-2}, \sqrt{B/X})$.  If $X$ is chosen to be at least $B^{1/3}$, that means the table will only require primes up to $B^{1/3}$.  Naturally, the larger we choose $X$, the smaller the table needs to be.  

At our scale, a significant portion of the total tabulation time of the  large case was spent just in preproduct creation.  It was important to both minimize duplicated work and yet have a balanced, efficient parallel algorithm.  To properly distribute the computation of preproducts (and thus the work to find the completions of such preproducts) with $k$ prime factors, every processor created all bound-admissible preproducts with $\lfloor k/2 \rfloor$ prime factors.  These were counted and then a given processor advanced to the next appending more primes only when the count of preproducts is in its residue class modulo the total number of processors.   

\section{Asymptotic counts of Preproducts}\label{sec:analysis}

Due to empirical timing results and the theoretical results of Section \ref{sec:P_large}, it is reasonable to believe that the most frequent case of completing a preproduct $Pq$ is done in polynomial time.  Therefore, a reasonable guide to the cost of tabulation is the count of bounds admissible preproducts.    
\begin{theorem}\label{num_preprod}
The number of bound admissible $P_{d-1}$ is bounded above by
\[ B\exp\left(\left(-\frac{1}{\sqrt{2}} +o(1)\right)\sqrt{\log B \log\log B}\right). \]
\end{theorem}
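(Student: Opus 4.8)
The plan is to recast bound-admissibility as a smoothness condition and then to optimize a crude splitting of the resulting sum. A bound admissible $P_{d-1}$ is in particular a positive integer $m$ whose largest prime factor $q = p_{d-1}$ satisfies $mq < B$; dropping the cyclicity, squarefreeness, and lower-bound requirements only enlarges the count, so it suffices to bound
\[ N := \#\{\, m \ge 1 : m \cdot P^{+}(m) < B \,\}, \]
where $P^{+}(m)$ is the largest prime factor of $m$, with $P^{+}(1) := 1$. Writing $m = Pq$ with $q = P^{+}(m)$, so that $P$ runs over integers every prime factor of which is at most $q$, the condition $m \cdot P^{+}(m) < B$ is exactly $P < B/q^{2}$. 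Hence, with $\Psi(x,y) := \#\{\, n \le x : P^{+}(n) \le y\,\}$ the usual count of $y$-smooth numbers,
\[ N \;\le\; 1 + \sum_{\substack{q \text{ prime} \\ q < \sqrt{B}}} \Psi\!\left( \frac{B}{q^{2}}, q \right). \]
Write $L = \log B$. It remains to bound this sum; heuristically, a small $q$ forces $P$ to be improbably smooth while a large $q$ forces $B/q^{2}$ to be small, and the two effects balance when $\log q$ has order $\sqrt{L\log L}$.

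The main step is to split the sum at $\log q = T := \frac{1}{\sqrt{2}}\sqrt{L\log L}$, with an auxiliary cut at $q = (\log B)^{2}$ for the very small primes. For $q \le (\log B)^{2}$ one has $\Psi(B/q^{2}, q) \le \Psi(B, (\log B)^{2}) = B^{1/2 + o(1)}$ by the classical estimate $\Psi(x, (\log x)^{c}) = x^{1 - 1/c + o(1)}$, so summing over the $O((\log B)^{2})$ such primes still contributes only $B^{1/2 + o(1)}$, which is negligible against the target bound. For $(\log B)^{2} < q \le e^{T}$ the parameters lie in a range where the standard smooth-number estimate $\Psi(x,y) = x\,u^{-u(1 + o(1))}$, with $u = \log x / \log y$, applies; here $u_{q} = \log(B/q^{2})/\log q \ge L/T - 2 = \sqrt{2L/\log L} - O(1)$, so that $\log u_{q} \ge (\frac{1}{2} + o(1))\log\log B$ and therefore $u_{q}\log u_{q} \ge (1 - o(1))\frac{1}{\sqrt{2}}\sqrt{L\log L}$. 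Since $u \mapsto u\log u$ is increasing, this lower bound is uniform over the range, and pulling it out of the sum while using convergence of $\sum_{q} q^{-2}$ gives
\[ \sum_{(\log B)^{2} < q \le e^{T}} \Psi\!\left( \frac{B}{q^{2}}, q \right) \;\le\; B\exp\!\left( -(1 - o(1))\frac{1}{\sqrt{2}}\sqrt{L\log L} \right). \]
Finally, for $q > e^{T}$ use the trivial bound $\Psi(B/q^{2}, q) \le B/q^{2}$ together with $\sum_{q > x} q^{-2} \ll 1/(x\log x)$ (partial summation from the prime number theorem), which yields a contribution $\ll B/(T e^{T}) = B\exp(-(1 - o(1))\frac{1}{\sqrt{2}}\sqrt{L\log L})$. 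The cutoff $T$ is dictated precisely by equating the estimates $u_{q}\log u_{q} \approx \frac{L\log L}{2T}$ and $T$, i.e.\ by $T^{2} \approx \frac{1}{2}L\log L$, and this is where the constant $\frac{1}{\sqrt{2}}$ is born. Summing the three pieces gives $N \le B\exp\!\left( (-\frac{1}{\sqrt{2}} + o(1))\sqrt{\log B\,\log\log B} \right)$.

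The step I expect to be the real obstacle is uniform control of $\Psi(x,y)$ across the whole range of $q$: the clean approximation $\Psi(x,y) \approx x\rho(u)$ degrades once $y$ is only a bounded power of $\log x$, which is precisely why the argument is arranged to absorb the tiny primes into a separate crude estimate and to invoke the smooth-number bound only where it is legitimate. A secondary delicate point is the accounting of error terms: the factor $\sqrt{\log\log B}$ in the final exponent comes entirely from the relation $\log u_{q} = (\frac{1}{2} + o(1))\log\log B$ at the top of the middle range, which requires $\log q$ to sit between roughly $\log\log B$ and a small power of $\log B$, exactly the window singled out by $T$. Finally, I would note that this argument is surely not optimal --- keeping $B/q^{2}$ together with $u^{-u}$ inside the sum, rather than separating the two, should improve the constant --- but the stated estimate is all that is required as a guide to the total cost of the algorithm.
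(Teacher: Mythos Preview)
Your proof is correct and follows the same strategy as the paper's: bound the count by $\sum_{p < \sqrt{B}} \Psi(B/p^{2}, p)$, split the sum at $p = \exp\bigl(\tfrac{1}{\sqrt{2}}\sqrt{\log B\,\log\log B}\bigr)$, and balance a smooth-number estimate below the cut against the trivial bound $B/p^{2}$ above it. Your version is somewhat more careful than the paper's---you insert an auxiliary cut at $q=(\log B)^{2}$ to stay inside the validity range of the $u^{-u}$ estimate, and in the middle range you retain the factor $1/q^{2}$ and use convergence of $\sum q^{-2}$, whereas the paper simply bounds each term by $\Psi(B,e^{c\sqrt{L\log L}})$ and multiplies by the number of primes---but the decomposition, the key inputs, and the optimization yielding $c=1/\sqrt{2}$ are the same.
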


\begin{proof}

Letting $P_{d-1} = P_{d-2}p_{d-1}$, then $P_{d-2}p_{d-1}^2 < B$.  We will count $P_{d-2}$ as a number less than $B/p_{d-2}^2$ and being $p_{d-2}$-smooth.  That is, the number of $P_{d-1}$ under consideration is at most
\[\sum_{p<B^{1/2}} \Psi\left(\frac{B}{p^2},p\right) \]
where $\Psi(x,y)$ is the number of $y$-smooth numbers up to $x$.

We split the sum $p\leq \exp(c\sqrt{\log B \log\log B})$ and $\exp(c\sqrt{\log B \log\log B})<p<B^{1/2}$ where $c$ is a constant to be chosen later. The former range is treated by
\[ \sum_{p\leq \exp(c\sqrt{\log B \log\log B})} \Psi\left(\frac{B}{p^2},p\right) \]
\[\leq\sum_{p\leq\exp(c\sqrt{\log B \log\log B})} \Psi(B,\exp(c\sqrt{\log B \log\log B}) ) \]
\[ \leq B\exp\left(\left(c-\frac1{c}+o(1)\right)\sqrt{\log B \log\log B}\right) \]
by Hildebrand's estimate on smooth numbers \cite{smooth_hild86}. 


The latter range is treated by the trivial bound
\[ \sum_{\exp(c\sqrt{\log B \log\log B})<p\leq B^{1/2}} \frac{B}{p^2}\ll B\exp(-c \sqrt{\log B \log\log B}).\]

The optimal $c$ is satisfying $c-\frac{1}{c}=-c$, so $c=1/\sqrt{2}$.
\end{proof}

We were initially concerned with the count of preproducts from the small case.  We posed that problem in a session at the West Coast Number Theory Conference (2022) and Sungjin Kim showed us how to count that set.  In particular, he proved the following.

\begin{theorem}
The number of bound admissible $P_{d-2}$ is bounded above by
\[ B\exp\left(\left(-\frac{2}{\sqrt{6}} +o(1)\right)\sqrt{\log B \log\log B}\right). \]
\end{theorem}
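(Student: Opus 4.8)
The plan is to run the same template as the proof of Theorem~\ref{num_preprod}, the only structural difference being that a bound admissible $P_{d-2}$ must leave room for \emph{two} more primes, so its largest prime factor enters with exponent three rather than two. If $P_{d-2}$ is bound admissible then $P_{d-2}p_{d-2}^2 < B$, where $p_{d-2}$ is its largest prime factor. Writing $P_{d-2} = m\,p_{d-2}$ with $m = P_{d-2}/p_{d-2}$, the cofactor $m$ is $p_{d-2}$-smooth and satisfies $m\,p_{d-2}^3 < B$, so $m < B/p_{d-2}^3$ and $p_{d-2} < B^{1/3}$. Since $P_{d-2}$ is recovered from the pair $(m,p_{d-2})$, and since imposing the cyclic and squarefree conditions only shrinks the count, the number of bound admissible $P_{d-2}$ is at most
\[ \sum_{p < B^{1/3}} \Psi\!\left(\frac{B}{p^3},\, p\right), \]
where $\Psi(x,y)$ denotes the number of $y$-smooth integers up to $x$.

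Next I would fix a parameter $c>0$, put $y_0 = \exp\!\big(c\sqrt{\log B\log\log B}\big)$, and split the sum at $p = y_0$. For the small primes, monotonicity of $\Psi$ in both arguments gives $\Psi(B/p^3,p) \le \Psi(B,y_0)$ for each $p\le y_0$, and there are at most $y_0$ such primes, so this part is at most $y_0\,\Psi(B,y_0)$. Because $\log y_0 = c\sqrt{\log B\log\log B}$ exceeds $(\log\log B)^{5/3+\varepsilon}$ for $B$ large, Hildebrand's estimate \cite{smooth_hild86} applies: $\Psi(B,y_0) = B\,\rho(u_0)(1+o(1))$ with $u_0 = \log B/\log y_0 = \tfrac1c\sqrt{\log B/\log\log B}$. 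Since $\log u_0 = \big(\tfrac12+o(1)\big)\log\log B$, the asymptotic $\rho(u_0) = \exp\!\big(-(1+o(1))u_0\log u_0\big)$ gives $\Psi(B,y_0) = B\exp\!\big(-(\tfrac1{2c}+o(1))\sqrt{\log B\log\log B}\big)$, so the small-prime part is at most $B\exp\!\big((c-\tfrac1{2c}+o(1))\sqrt{\log B\log\log B}\big)$. For the large primes I would use the trivial bound $\Psi(B/p^3,p)\le B/p^3$, so that part is at most $B\sum_{n>y_0}n^{-3} \ll B\,y_0^{-2} = B\exp\!\big(-2c\sqrt{\log B\log\log B}\big)$.

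Finally, the count is $\ll B\exp\!\big((c-\tfrac1{2c}+o(1))\sqrt{\log B\log\log B}\big) + B\exp\!\big(-2c\sqrt{\log B\log\log B}\big)$. The first exponent increases in $c$ and the second decreases, so the optimal $c$ balances them: $c-\tfrac1{2c} = -2c$, i.e.\ $6c^2 = 1$, giving $c = 1/\sqrt6$ and common value $-2/\sqrt6$. This is exactly the claimed bound.

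The step needing the most care — rather than a genuine obstacle — is the smooth-number bookkeeping: verifying that $y_0$ lies in Hildebrand's range of uniformity and that every $o(1)$ (from Hildebrand's relative error, from the asymptotic for $\rho$, and from replacing $\pi(y_0)$ by $y_0$) is negligible next to $\sqrt{\log B\log\log B}$. The one real modeling choice is to extract only the single largest prime factor and treat the remaining cofactor as an unstructured smooth number; this is lossy (a sharper bound on $\Psi(B/p^3,p)$ for small $p$ would push the constant below $2/\sqrt6$), but it suffices for the stated bound and keeps the argument parallel to that of Theorem~\ref{num_preprod}.
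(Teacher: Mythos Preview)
Your proposal is correct and follows exactly the route the paper indicates: the paper does not spell out a proof here but says ``the proof is substantially the same'' as that of Theorem~\ref{num_preprod}, and you have carried out precisely that adaptation, replacing $B/p^2$ by $B/p^3$ in the smooth-number sum, splitting at $y_0=\exp(c\sqrt{\log B\log\log B})$, and optimising $c$. Your bookkeeping on the Hildebrand estimate (in particular the computation $u_0\log u_0 \sim \tfrac{1}{2c}\sqrt{\log B\log\log B}$ leading to the exponent $c-\tfrac{1}{2c}$ for the small-prime range) is done carefully and yields the stated constant $-2/\sqrt{6}$ after balancing against $-2c$.
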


The proof is substantially the same and we thank him for his help on this problem.  

The count given in Theorem \ref{num_preprod} differs from the set we used in two ways.  First, we counted smooth numbers instead of counting cyclic numbers.  However, since the asymptotic density of cyclic numbers is only off from linear by a $\log \log \log B$ factor, we do not believe that the difference in these two sets is meaningful.  Second,  we have not omitted small preproducts from consideration.   We would only want to count bound admissible $P_{d-1}$ for which $P_{d-2} > B^{1/3}$.  However, we also do not believe that this set would be significantly different from what we counted in Theorem \ref{num_preprod}.


We believe, as a guide, that the cost of completing a given preproduct $Pq$ is a polynomial-time algorithm, and that the bound in Theorem \ref{num_preprod} serves as a guide for the count of preproducts constructed.  A better guide might tell us the distribution of preproducts in $[ 0, B]$.  In Section \ref{sec:P_large}, we saw that the larger a preproduct was, the more likely it was to be an easy case.    So, both the number of difficult problems and their distribution is important to establish sharper asymptotic bounds.  

\section{Implementation, statistics, and conclusion}  \label{sec:conc}

While our high level overview in Section \ref{sec:alg_over} presented tabulation as a single algorithm, the implementation consisted of many different programs.   We chose $X = 7 \cdot 10^7$ as our cross-over point and $B = 10^{22}$.  The choice of $X$ was made before we knew what would be a feasible choice for $B$.  The small and large preproduct cases were implemented and run independently.  The large case was written so that it finds Carmichael numbers with exactly $d$ prime factors and had many separate computations:  one for each $4 \leq d \leq 9 $ and one recursive version that handled $d > 9$.  We implemented in C++;  everything was done in machine word size except three quantities that used GMP.  In the small case, we constructed Carmichael numbers outside the machine word bounds and also used GMP for primality testing of $q$ and $r$.  The code may be found on GitHub \cite{TabCodeSW}.  

Combining large and small cases provides the exact count by prime factors for the total tabulation up to $B = 10^{22}$.
\[
\begin{array}{|c|c|c|c|c|c|} \hline

d &  3& 4 & 5  & 6  & 7 \\ \hline 
C_d(10^{22})& 420658 & 232742  & 1370257 & 6034046  & 14056367  \\ \hline \hline  
 8 & 9  & 10  & 11  & 12  & 13 \\ \hline
 16005646 &  8939435  & 2347828  & 262818  & 10018 & 55 \\ \hline
\end{array}
\]
Each of these numbers was proven to be a Carmichael number with a separate check program, and we confirmed that the numbers up to $10^{21}$ matched Pinch's most recent tabulation.  Testing was also aided by a comparison against an unpublished tabulation performed by Claude Goutier.

The next two subsections give greater details about the two different computational regimes.

\subsection{Small Preproduct Information}

In \cite{sw_ants2}, the computation was meant to be a proof of concept and had a relatively small bound of $P < 3 \cdot 10^6$.  We have extended the bound to $P < 7 \cdot 10^7$.   In Theorem 5 of \cite{sw_ants2}, it was shown that the total cost to use all $P < X$ is $O( (X \log X)^2 )$.  The small case tabulation here represents about $800$ times more work over that of \cite{sw_ants2}.  It was computed using 33.1 vCPU years on AMD EPYC 7B13 CPUs\footnote{Special thanks to Drew Sutherland and the Simons Collaboration in Arithmetic Geometry, Number Theory, and Computation for computational support.}.

We did the computation unbounded and found $35985331$ Carmichael numbers.  Of these, only $1202914$ were less than $B$.  In the table below, we show the contribution to the tabulation by order of magnitude:

\[
\begin{array}{|c|c|c|c|c|c|c|} \hline
10^3 & 10^4 & 10^5 & 10^6 & 10^7 & 10^8 & 10^9  \\ \hline
1 & 7 & 16 & 43 & 105 & 255 & 646 \\   \hline  \hline

10^{10} & 10^{11} & 10^{12} & 10^{13} & 10^{14} & 10^{15} & 10^{16}  \\ \hline
1547 & 3605 & 8238 & 19019 & 40772 & 79417 & 137794  \\ \hline \hline

10^{17} & 10^{18} & 10^{19} & 10^{20} & 10^{21} & 10^{22} & 10^{23}  \\ \hline
219887 & 327732 & 464605 & 639763 & 872861 & 1202914 & 1708687  \\ \hline
\end{array}
\]

It should be noted that this produced a complete tabulation of all Carmichael numbers less than $10^{11}$.  For the $10^{12}$ bound, this computation only missed three Carmichael numbers:
\begin{itemize}
\item $702712420201 = ( 11 \cdot 37 \cdot 43 \cdot 61 \cdot 71 ) \cdot 73 \cdot 127$,
\item $919707221161 = ( 19 \cdot 29 \cdot 43 \cdot 53 \cdot 73 ) \cdot 79 \cdot 127$,
\item $995483689201 = ( 19 \cdot 31 \cdot 37 \cdot 53 \cdot 61 ) \cdot 71 \cdot 199$.
\end{itemize}
Each of these numbers has a preproduct (the portion in parentheses) that exceeds $7 \cdot 10^7$.  So, this tabulation regime could not have found these numbers, but they were found in the large preproduct portion of the tabulation with $d=7$.  For the $10^{13}$ bound, there are $260$ missing Carmichael numbers.  And so it goes; the small case made the most significant contributions with smaller $d$ values.  For the cases $d \geq 7$, the small case found no Carmichael numbers.  

Since we ran this part of the computation as unbounded, we found many Carmichael numbers that were significantly larger than $10^{22}$.  The largest one we found is
\begin{align*}
69999133 \cdot 4899878690750821 \cdot 171493630078866294519097 = \\
 58 \ 82013 \ 03152 \ 54068 \ 53935 \ 58087 \ 37155 \ 82013 \ 87008 \ 71721. 
 \end{align*}
This example may be reproduced with the parameters $P = 69999133,  D = 2,$ and $\Delta = 1$ and would have been found with the $D\Delta$ method.   The dynamic choice in the hybrid method would have chosen the lesser amount of work:  iterate through $768$ divisors or iterate through approximately $7 \cdot 10^7$ values of $C$.  This example represents $q$ and $r$ taking maximal values.  That is, $ q - 1 = ( P - 1 )( P + 2 )$ and  $ r - 1 = ( Pq - 1 ) / 2 $.


\subsection{Large Preproduct Information}

Here are the counts of Carmichael numbers constructed in the large preproduct case, broken down by $d$: 
\[
\begin{array}{|c|c|c|c|c|} \hline
4 & 5  & 6  & 7  & 8 \\ \hline 
100164  & 1024718 & 5769410  & 14017450  & 16005060\\ \hline \hline  
 9  & 10  & 11  & 12  & 13 \\ \hline
 8939435  & 2347828  & 262818  & 10018 & 55 \\ \hline
\end{array}
\]

The following table gives timing information for the $d=6$ tabulation.  Method $1$ is what we propose in Subsection \ref{subsec:faster} which uses the simplest example of the divisors in residue class results.  Method $2$ is what is described in Subsection \ref{pinch_trail} and balances sieving against naive trial division.     All timings are given in seconds run on a single core of an AMD Ryzen 9 3900X.

\[
\begin{array}{|c|c|c|c|c|c|c|} \hline
B & \mbox{Method 1 (SW)} & \mbox{Method 2 (Pinch)}  \\ \hline
10^{14} & 25 & 25  \\ \hline
10^{15} & 192 & 189 \\ \hline
10^{16} & 1324 & 1306   \\ \hline
10^{17} & 8752 & 8865   \\ \hline
10^{18} & 55631 & 56072  \\ \hline
10^{19} & 361983 & 364816 \\ \hline
\end{array}
\]
The improvements are very modest.  The reason is that the proportion of cases for which the new method gives an exponential speedup are relatively rare.    We know that the new method outperforms Pinch's at asymptotic scales, but unfortunately the scales at which computations are performed are not asymptotic.  One observation that gives hope is that the timings of Method 1 got better as the upper bound increased. 
 It is not too hard to pick out isolated examples where the improve algorithm is demonstrably better.  For example, consider $P = 11756813235$.  The timings measured were $9$ microseconds for Method 1, and $17$ microseconds for Method 2.  

\subsection{Future Work}

We intend to continue to work on this problem.  Our guidance is that completing a preproduct is done on average in polynomial time;  thus the primary cost of tabulation was constructing the preproducts.  It would be better if the cost to enter the inner loop was insignificant compared to the work done in the inner loop.   We believe there are faster ways of constructing preproducts and that we may consider fewer preproducts.  Our goal is to tabulate all Carmichael numbers less than $10^{23}$. 

\bibliographystyle{amsplain}
\bibliography{manybases}

\end{document}